\newtheorem{theorem}{Theorem}[section]
\theoremstyle{definition}
\newtheorem{remark}{Remark}
\newcommand{\ep}{\varepsilon}
\newcommand{\RR}{{\mathbb R}}
\newcommand{\pa}{\partial}
\renewcommand{\Box}{\square}
\newcommand{\bv}{\mathbf{v}}
\newcommand{\bw}{\mathbf{w}}
\renewcommand{\vec}[1]{{\mathbf{#1}}}
\title[Control of blow-up]
      {Control of blow-up singularities\\ for nonlinear wave equations}
\author[Satyanad Kichenassamy]{}
\subjclass{Primary: 93B05, 93C20; Secondary: 35L71.}
\keywords{Boundary control, nonlinear wave equations, Fuchsian reduction, blow-up, symmetric hyperbolic}
 \email{satyanad.kichenassamy@univ-reims.fr}
\begin{document}
\maketitle

\centerline{\scshape Satyanad Kichenassamy}
\medskip
{\footnotesize
 \centerline{Laboratoire de Math\'ematiques}
   \centerline{Universit\'e de Reims Champagne-Ardenne}
   \centerline{Moulin de la Housse, B.P.\ 1039}
   \centerline{F-51687 Reims Cedex 2, France}
} 




\begin{abstract}
While the global boundary control of nonlinear wave equations that
exhibit blow-up is generally impossible, we show on a typical
example, motivated by laser breakdown, that it is possible to
control solutions with small data so that they blow up on a
prescribed compact set bounded away from the boundary of the
domain. This is achieved using the representation of singular
solutions with prescribed blow-up surface given by Fuchsian
reduction. We outline on this example simple methods that may be
of wider applicability.
\end{abstract}

\section{Introduction}

\subsection{Objectives}

It is well-known that the boundary control of solutions of
nonlinear Klein-Gordon equations that exhibit blow-up in finite
time is, in general, impossible. Indeed, assume that the given
initial and boundary data lead to blow-up for $t=t_0$ and $x=x_0$,
where the distance of $x_0$ from the boundary is greater than
$ct_0$, $c$ being the speed of propagation; then, the boundary
data do not have time to influence the solution at $x_0$ before
the blow-up time. The solution near the blow-up point is entirely
determined by the initial conditions, and no choice of boundary
conditions can modify this blow-up behavior. In other words,
boundary conditions, whatever their type, cannot, in general,
arrest blow-up. Nevertheless, it is often possible to steer small
data to zero (``local controllability'').\footnote{General methods
and results on the control of hyperbolic problems, together with
further references, may be found in the following papers:
\cite{russell,l-t-1,l-t-2,lions-1,l,l-1,b,z-hbk,z-z}. Among works
more particularly relevant to the present paper, we may mention
\cite{ch,ci,yz-zl,f,z-jmpa,z-cdf}, where further references may be
found. We do not aim at completeness. The considerations of the
present paper also apply to other nonlinearities, such as those
considered in \cite{z-jmpa}. For recent results on interior
control, see e.g.\ \cite{d-l-z}.}  The purpose of this paper is to
show that it is also possible, for cubic wave equations, to steer
small data in order to achieve blow-up on a prescribed compact set
in the interior of the domain.

This possibility is suggested by the method of Fuchsian reduction
\cite{k-l-1,k-l-2,fr} that yields solutions that blow up on a
given set in spacetime, for wide classes of equations. More
precisely, given a sufficiently smooth graph
$\Sigma=\{(x,t)\in\RR^n\times\RR : t=\psi(x)\}$, with
$|\nabla\psi|<1$ on the entire space, the method yields a solution
$u(x,t)$ that becomes singular precisely as $t\to \psi(x)-$, and
that is defined and regular in an open set limited by $\Sigma$.
Walter Littman observed to me, many years ago, that this type of
result suggests the possibility of a control of blow-up
singularities, since it furnishes an explicit construction of
boundary data that steer a particular set of Cauchy data so that
they blow up on a specified set, while remaining smooth elsewhere.
Indeed, taking $\psi$ to be positive, and a smooth bounded domain
$\Omega\subset\RR^n$ that contains a compact set $K$ on which
$\psi$ reaches its (positive) minimum $\psi_{\text{min}}$, the
restriction of $u$ and $u_t$ to the set where $t=0$ and
$x\in\Omega$ furnishes a pair of Cauchy data, and its trace on the
boundary of $\Omega$ provides Dirichlet boundary data, such that
the solution of the initial-boundary value problem with these data
first blows up precisely for $t=\psi_{\text{min}}$ and $x\in K$.
In other words,
\begin{quote}
\emph{It may not be possible to arrest blow-up, but it may be possible to force blow-up to occur at a specified time and place.}
\end{quote}
This explicit construction has the same advantages as the
classical restriction argument for the linear wave equation
\cite{russell-2,l-1}. However, it requires the initial data, as
well as the boundary data, to be chosen in a special way to ensure
blow-up occurs only on $\Sigma$. We show in this paper, on a
typical example that may be of some interest in applications, that
if the problem is locally controllable, it is possible to steer
the solution, starting from arbitrary small data, so that it blows
up on a prescribed set inside the domain. In a nutshell, the above
construction will be modified so as to ensure that $u$ has not
only smooth, but also \emph{small} data, that may, in turn, be
steered to zero by local controllability.

\subsection{The model}

Our model is
\begin{equation}\label{weq} \Box u = 2u^3,
\end{equation}
in three space dimensions, to fix ideas. Similar considerations
apply to complex solutions of $\Box u +\alpha u_z = \beta u|u|^2$,
where $\alpha$ and $\beta$ are constants. This latter problem is a
model for the envelope of the electric field of an ultra-short
optical pulse, taking normal dispersion and paraxiality
corrections into account. In the language of laser breakdown, our
statement may be translated as follows:
\begin{quote}
\emph{While  it is impossible to arrest laser self-focusing, it is possible to arrange boundary data so that breakdown occurs at a place and time specified in advance.}
\end{quote}
Before outlining the proof of this statement, we introduce some
notation. We shall have to work with two sets of variables: the
original space and time variables $(x,t)\in\RR^n\times\RR$, and
variables adapted to $\Sigma$: $T=\psi(x)-t$; $X=x$. We write
$\pa_i$ for $\pa/\pa X_i$, where $i$ runs from 1 to $n$; we have
$n=3$ in the example from nonlinear optics, but this will not be
used in the sequel. Also, $\psi$ may be viewed as a function of
$x$ or $X$. The smoothness of $\Sigma$ is measured in Sobolev
spaces: we take $\psi\in H^\sigma$, where $\sigma$ will be taken
sufficiently large. Throughout, we assume $\sup|\nabla\psi|<1$ and
$\sup|\psi|<1$ everywhere. Therefore, $|t-T|<1$. The regularity of
the solution $u(X,T)$ will be estimated in $H^s\times H^{s-1}$,
with $s\geq\sigma-4$, this choice being dictated by the regularity
of the coefficients of the Fuchsian system introduced in Sect.~3.
We also let $S=(1-\Delta)^{s/2}$. Finally, $\|\quad\|_s$ stands
for the norm in $H^s$. Recall that multiplication is a continuous
bilinear map from $H^s\times H^s$ to $H^s$.

\subsection{Outline of the argument}

The argument for proving this controllability of blow-up
singularities for the model at hand is as follows. We are given
the compact set $K$ within $\Omega$. We are also given $s_0$ and
$\ep>0$ so that one has local controllability in the smooth
bounded domain $\Omega$ for Cauchy data of norm less than $\ep$ in
$H^{s_0}\times H^{s_0-1}(\Omega)$ \cite{yz-zl,ch}. We may assume
$s_0>n/2+1$ without loss of generality. We first of all choose
$\alpha>2$ so that the Cauchy data for the exact solution $1/t$,
for $t=\alpha$, have norm less than $\ep/4$ in $H^{s_0}\times
H^{s_0-1}$. The objective is to show that, if the constants $\alpha$
and $\sigma$ are taken large enough, there is a constant $\mu$
such that $\|\psi\|_\sigma<\mu$ ensures that there are solutions
$u$ of (\ref{weq}) that blow-up for $t=\psi(x)$ and have data on
the hyperplane $(t=\alpha)$ that are less than $\ep$ in
$H^{s_0}\times H^{s_0-1}(\Omega)$. It is always possible to choose
$\psi$ so that it vanishes precisely on $K$, and is negative
elsewhere; one may also assume its $H^\sigma$ norm to be as small
as we wish---consider $\lambda\psi$, with $\lambda$ positive and
small if necessary. By time reversal (considering $u(\alpha-t)$),
we obtain a solution with Cauchy data on $(t=0)$ that are less
than $\ep$ in norm, and that first blows up on $K$. Taking the
trace of this solution on $\pa\Omega$, we obtain the result
\begin{quote}
There are small Cauchy data, and boundary controls on $\pa\Omega$ that yield a solution that blows up for $t=\alpha$ and $x\in K$, and remains finite for $t=\alpha$ and $x\in \Omega\setminus K$.
\end{quote}
Combining this with the local controllability result gives the
desired boundary control of blow-up singularities.

The rest of the paper is devoted to showing that one can choose
$\alpha$, $\sigma$ and $\mu$ with the above properties. This is
achieved by constructing a solution $u$ of (\ref{weq}) consisting
of three parts:
\begin{equation}
u=\frac1t+\Phi+T^3w,
\end{equation}
where $T=t-\psi(x)$, and $\Phi$ is an explicit expression
involving $\psi$ and its derivatives, and that vanishes when
$\psi$ is identically zero. In fact, $1/t$ is an exact solution of
(\ref{weq}). In Sect.~2, $\Phi$ is obtained by truncating a formal
solution of (\ref{weq}); it is completely determined by $\psi$,
and has small Cauchy data on $(t=\alpha)$ is $\psi$ is small. In
Sect.~3, $w$ is found as the solution of a degenerate
initial-value problem of Fuchsian type. The restriction $w_0$ of
$w$ to $\Sigma$, must be specified in order to determine $w$.
Sect.~4 deals with the estimation of $w$. Since $|t-T|<1$, in
order to estimate the Cauchy data of $T^3w$ in $H^{s_0}\times
H^{s_0-1}$ on the hyperplane $\{t=\alpha\}$, it suffices  to
estimate the \emph{space-time} Sobolev norm of index $s$ of $w$ on
some slab of the form $(\alpha-1\leq T\leq b)$, where
$b>\alpha+1$, with $s>s_0+1/2$, and then take the traces of $T^3w$
and $(T^3w)_t$ on the hyperplane $(t=\alpha)$. It is such an
estimate that we obtain in Sect.~5: these traces are small if
$\|\psi\|_\sigma+\|w_0\|_s$ is, provided $\sigma$ is large enough.
The estimation of $u$ is then easily completed: $\alpha$ has been
chosen at the outset to make the Cauchy data of $1/t$ less than
$\ep/4$; the Cauchy data of $\Phi$ have the same property if
$\|\psi\|_\sigma$ is less than some $M_1$, and those of $T^3w$ are
less than $\ep/2$ if $\|\psi\|_\sigma+\|w_0\|_s$ does not exceed
some $M_2$. Therefore, if $\|\psi\|_\sigma<\min(M_1,M_2)$,
and $\psi$ is nonpositive and vanishes only on $K$, we may take
$w_0$ so that the resulting solution $u$ has Cauchy data less than
$\ep$ in norm, and blows up precisely on the compact $K$, as
desired .

\section{Step 1: Introduction of the formal expansion of $u$}

In the variables $(X,T)$, the wave equation (\ref{weq}) takes the form
\begin{equation}\label{weqT}
\gamma u_{TT}-\Delta_X u+2\nabla\psi\cdot\nabla u_T+u_T\Delta\psi=2u^3,
\end{equation}
where $\gamma=1-|\nabla\psi|^2$. The solvability of the standard
Cauchy problem for (\ref{weqT}) with Cauchy data $(f,g)$ on
$\Sigma$ means that, if $f$ and $g$ in suitable function spaces,
there is, near $\Sigma$, a unique function $v$ such that
$u=f+Tg+T^2v$ solves (\ref{weqT}).\footnote{The local solvability
of this problem is a very special case of standard results on
symmetric-hyperbolic systems, since any strictly hyperbolic
operator admits of symmetrization, see e.g.~\cite[\S
5.2--5.3]{taylor}. An explicit reduction is given below, see
(\ref{eq:sym_syst_cubic}).} Singular solutions may be obtained by
a closely related Ansatz: seek solutions in the form
\[ u=\frac 1T\left\{u_0+u_1T+\dots\right\}.
\]
This may be viewed as a perturbation of the exact solution $1/t$ of (\ref{weq}).\footnote{The existence of an exact solution simplifies matters, but is not essential: it is possible to construct singular solutions by a similar Ansatz even if there is no exact solution independent of space variables. Also, since $-u$ is a solution if $u$ is, there are also solutions that blow up to $-\infty$. Both solutions are very easy to produce numerically, by using a local explicit scheme \cite{cabart}.} In the present situation, (\ref{weq}) is formally solved by an expression of the form
\[ u=\frac 1T\left\{u_0+u_1T+u_2T^2+u_3T^3+u_{4,1}T^4\ln T+T^4w\right\},
\]
where $w$ is a series in $T$ and $T\ln T$, with coefficients
depending on $X$.\footnote{It is convenient to treat $T$ and $T\ln
T$ as if they were independent variables for bookkeeping purposes,
when computing formal series solutions.} The coefficients $u_0$,
$u_1$, $u_2$, $u_3$ and $u_{4,1}$ are entirely determined by
$\psi$ and its derivatives up to order four; one finds
$u_0=\sqrt\gamma$. They may be found recursively, and have a
geometric interpretation \cite[pp.~271--273]{fr}, \cite{cabart}.
All we need here is that they are obtained by dividing polynomials
in derivatives of $\psi$ by powers of $\gamma$ and that, apart
from $u_0$, they vanish when $\psi$ is identically zero. The rest
of the series is entirely determined by the value $w_0(X)$ of $w$
for $T=0$, and the coefficients of the expansion of $w$ may be
found recursively, as long as $\psi$ possesses sufficiently many
derivatives. Thus, the Cauchy data are replaced by the pair of
\emph{singularity data} $(\psi,w_{0})$.

There are two essential differences with the Cauchy problem: the
expansion must include logarithmic terms, even if the solution is
infinitely smooth off $\Sigma$, and the singularity data are not
the first two terms in the expansion.\footnote{There are general
rules to determine the form of the expansion and the nature of the
data (see \cite{fr}), that generalize the usual rules for the form
of series solutions of ODEs of Fuchsian type, such as the Bessel
or hypergeometric equations, hence the name of the method;
however, the present solutions are not necessarily analytic. More
general examples require even more complicated series solutions
that have no counterpart in the ODE case.} For our purposes, the
exact expression for the coefficients of the expansion is not
needed. It suffices to write
\[ u = \frac 1t + \Phi + T^3w,
\]
where
\[ \Phi=\left(\frac {u_0}T-\frac 1t\right)+u_1+u_2T+u_3T^2+u_{4,1}T^3\ln T.
\]
Since $\alpha>2$, this expression has no singularity on $(t=\alpha)$; it also vanishes with $\psi$ (and its derivatives).  Because smooth functions act on Sobolev spaces of index higher than $n/2$, is follows that
\[ \|\Phi\|_{s}+ \|\Phi_t\|_{s-1}\leq C(\alpha)\|\psi\|_\sigma(1+\|\psi\|_\sigma^q)
\]
for a suitable integer $q$, and the Sobolev norms are taken on the
hyperplane $t=\alpha$. By the above estimate on $\Phi$, there is a
constant $M_1$, that also depends on $\alpha$, hence on $\ep$,
such that the Cauchy data of $\Phi$ have the same property if
$\|\psi\|_\sigma<M_1$.

\section{Step 2: Reduced equation for $w$.}

Let us examine the equation satisfied by $w$: this is the reduced
equation (RE). Writing $D=T\pa_T$, its form is
\[ (\gamma D(D+5)-T^2\Delta_X+T^2\nabla\psi\nabla\pa_T) w = \text{lower-order terms},
\]
where $\gamma=1-|\nabla \psi (x)|^2 $. Observe that the singular
set $(T=0)$ is characteristic for the operator on the left-hand
side, but not for the wave operator. To obtain $w$, we solve the
initial-value problem for the RE with only one initial condition:
$w(T=0)=w_{0}$. This may be achieved by casting the problem in the
form of a first-order reduced system (RS), that is symmetric
hyperbolic for $T\neq 0$ \cite[Th.~10.10, pp.~186-8]{fr}:
\begin{theorem}\label{th:sym_pb}
For $s$ and $m$ large enough, there are symmetric matrices $Q$ and $A^j$, $1\leq j\leq n$, a constant matrix $A$ and functions $f_0$ and $f_1$ such that the solution $\vec{w}=(w,w_{(0)},w_{(i)})$ of the reduced system (RS)
\begin{equation}\label{eq:symmetric}
Q(D+A) \vec{w}=T A^j \pa_{j} \vec{w}+T f_0(T,T\ln T,X,\vec{w})+T\ln T f_1(T,T\ln T,X,\vec{w}),
\end{equation}
exists for small $T$, and generates a solution $u=1/t+\Phi+T^3 w$ of the wave equation,
provided that $\bw(T=0)$ is small in $H^s$ and belongs to the null-space of $A$.
\end{theorem}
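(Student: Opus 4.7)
The plan is to derive the reduced system \eqref{eq:symmetric} from \eqref{weqT} in two stages and then invoke Theorem~10.10 of \cite{fr} on symmetric-hyperbolic Fuchsian systems. First, I would substitute the ansatz $u = 1/t + \Phi + T^3 w$ into \eqref{weqT}. By the construction of the formal expansion in Section~2, all singular and low-order contributions cancel, so that after dividing by the appropriate power of $T$ one obtains a reduced equation whose principal part is $\gamma D(D+5) w - T^2 \Delta_X w + T^2 \nabla\psi \cdot \nabla w_T$, and whose right-hand side has the schematic form $T f_0(T, T\ln T, X, w, Dw, T\nabla w) + T\ln T\, f_1(\ldots)$, with $f_0, f_1$ smooth in all arguments, rational in $\gamma$, and polynomial in derivatives of $\psi$ up to an order controlled by $m$.

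Second, I would recast this as a first-order system in the unknowns $w$, $w_{(0)} := Dw$ and $w_{(j)} := T\partial_j w$ for $j = 1,\ldots,n$. The identities $Dw = w_{(0)}$ and $Dw_{(j)} = T\partial_j w_{(0)} + w_{(j)}$, together with the reduced equation rewritten as an evolution for $Dw_{(0)}$, combine into a system of the form \eqref{eq:symmetric}. Weights on each block are chosen so that $Q$ is block-diagonal with symmetric positive blocks (the $w_{(0)}$-block carrying the coefficient $\gamma > 0$), so that the principal spatial couplings arising from $-T^2\Delta_X w = -T\partial_j w_{(j)}$ produce symmetric matrices $A^j$, and so that $A$ is constant with spectrum inherited from $D(D+5)$; the null-space of $A$ is then the $w$-direction, and the constraint $\vec{w}(0) \in \ker A$ reduces to $w_{(0)}(0) = w_{(j)}(0) = 0$, leaving $w_0 = w|_{T=0}$ as the only free datum. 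Theorem~10.10 of \cite{fr} then yields, for $s$ and $m$ large enough, a local-in-$T$ solution $\vec{w}$ whenever $w_0$ is sufficiently small in $H^s$, and a standard constraint-propagation argument based on the symmetric-hyperbolic energy identity applied to $w_{(0)} - Dw$ and $w_{(j)} - T\partial_j w$ shows that these constraints are preserved for $T > 0$. Hence $u = 1/t + \Phi + T^3 w$ solves \eqref{weqT}.

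The main obstacle is the symmetrisation itself. Because \eqref{weqT} contains the first-order cross term $2\nabla\psi\cdot\nabla u_T$, the naive first-order reduction produces non-symmetric couplings between the $w_{(0)}$ and $w_{(j)}$ blocks, and one must introduce $T$-dependent rescalings of the auxiliary variables and adjust $Q$ accordingly so that $Q$ and the $A^j$ are symmetric simultaneously while $A$ remains constant with the correct spectrum. A secondary difficulty is the logarithmic structure: the presence of $u_{4,1} T^3 \ln T$ in the formal expansion forces the source to split as $Tf_0 + T\ln T f_1$ with two independent smooth functions of $(T, T\ln T, X, \vec{w})$, and verifying this regularity requires carefully tracking the origin of each logarithmic term through the substitution.
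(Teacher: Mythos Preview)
Your overall strategy---substitute the ansatz into the second-order equation first, then perform a first-order reduction---is a legitimate alternative to the paper's route, but your concrete choice of first-order variables creates a genuine gap. The paper reverses the order of operations: it first writes the standard symmetric-hyperbolic system for $\bu=(u,u_{(0)},u_{(i)})$ associated with the wave equation, and only then substitutes the truncated expansion, defining $w$, $w_{(0)}$, $w_{(i)}$ as the remainders in $u$, $u_{(0)}$, $u_{(i)}$ divided by $T^3$, $T^2$, $T^2$ respectively. That order is not cosmetic. The symmetry of $Q$ and of the $A^j$ is inherited directly from the wave-equation symmetriser (the cross term $2\nabla\psi\cdot\nabla u_T$ lands on the diagonal of $A^j$ as $-2\psi^j$, which is trivially symmetric), so the symmetrisation obstacle you flag simply does not arise; and the power $T^2$ attached to $w_{(i)}$ forces the corresponding diagonal block of $A$ to be $+2$ (from $\partial_T(T^2 w_{(i)})=2Tw_{(i)}+T^2\partial_T w_{(i)}$), giving $A$ the eigenvalues $\{0,2,5\}$.

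Your choice $w_{(j)}:=T\partial_j w$, by contrast, yields the identity $Dw_{(j)}=w_{(j)}+T\partial_j w_{(0)}$, so in the form $Q(D+A)\bw=\cdots$ the $w_{(j)}$-block of $A$ carries the entry $-1$. The Fuchsian existence result you invoke requires $A$ to have no eigenvalue with negative real part; with eigenvalue $-1$ the homogeneous mode $cT$ is regular and vanishes at $T=0$, so prescribing $\bw(0)\in\ker A$ no longer determines the solution, and Theorem~10.10 of \cite{fr} does not apply as stated. Your constraint-propagation step cannot rescue this, since it presupposes existence of a solution of the RS in the first place. The remedy is precisely the paper's device: subtract the formal expansion of the \emph{derivatives} $u_{(0)}$, $u_{(i)}$ to the appropriate order and divide by the corresponding power of $T$, which shifts the spectrum of $A$ into the nonnegative range and simultaneously delivers the symmetric $A^j$ for free.
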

\begin{proof}
The RS is derived from the usual symmetric system associated with the wave equation (\ref{weq}) in the new variables
$X$ and $T$; letting $\vec{u}:=(u,u_{(0)},u_{(i)})$, where $u_{(0)}$ and the $u_{(i)}$ correspond to the time and space derivatives of $u$, this system reads:
\begin{equation}\label{eq:sym_syst_cubic}
\left\{
\begin{aligned}
\pa_T u &=u_{(0)},\\ (1-|\nabla\psi|^2) \pa_T u_{(0)} &=\sum_i
(\pa_{i} u_{(i)}-2 \psi_i \pa_{i} u_{(0)}) - (\Delta \psi)
u_{(0)}+2 u^3,\\ \pa_T u_{(i)} &=\pa_{i} u_{(0)}.
\end{aligned}
\right.
\end{equation}
Define the unknown $\bw=(w,w_{(0)},w_{(i)})$ through
\begin{equation}
\left\{
\begin{aligned}
u&=\frac{u_0}{t_0}+u_1+u_2 t_0+u_3 t_0^2 +u_{4,1} t_0^2
t_1+w(t_0,t_1,X) t_0^3,\\ u_{(0)}&=-\frac{u_0}{t_0^2}+u_2+2u_3
t_0+(t_0^2+3 t_0 t_1) u_{4,1} +w_{(0)}(t_0,t_1,X) t_0^2,\\
u_{(i)}&=\frac{u_{0i}}{t_0}+u_{1i}+u_{2i} t_0+u_{3i}
t_0^2+w_{(i)}(t_0,t_1,X) t_0^2.
\end{aligned}
\right.
\end{equation}
About the derivation of this expression, see Remark 1 below. After substitution, the symmetric system for $(u,u_{(0)},u_{(i)})$ goes into the desired system, where $Q,\,A^j,\,A$ are given by
\[
Q=\left[
\begin{array}{ccc}
1 &                       & \\
  & \gamma                & \\
  &                       &I_n\\

\end{array}
\right],\quad \text{and }\, A^j=\left[
\begin{array}{ccc}
0 & 0          &    0  \\ 0 & -2 \psi^j  &  e^j \\ 0 &  ^{t}e^j
&   0  \\
\end{array}
\right],
\]
where $e^j=(0,\dots,1,\dots,0)$ is the $j$th vector of the standard basis of $n$-space, and
\[
 A=\left[
\begin{array}{cc|ccc}
     3    &   -1  & 0 &\cdots& 0\\
    -6    &    2  & 0 &\cdots& 0\\
    \hline
     0    &    0  & 2 &      & 0\\
    \vdots& \vdots&   &\ddots&  \\
     0    &    0  & 0 &      & 2
    \end{array}
\right].
\]
The matrix $A$ is constant, with eigenvalues 0 and 5; the former is simple. For $T=0$, (\ref{eq:symmetric}) forces $Aw(T=0)=0$, which is why the solution of the RS is determined by only one initial value, namely, the first component of $\vec w$.
The exact form of $f=Tf_0+T\ln T f_1$ is again not essential:\footnote{See \cite[p.~187]{fr} for the complete expressions.} all we need is that
\[ f=g_0+g_1w+g_2w^2+g_3w^3,
\]
where the coefficients $g_k$ are polynomials in $T$ and $T\ln T$ without constant term, with coefficients that are products of the coefficients of $\Phi$ and of $\gamma$. The coefficient $g_0$ vanishes with $\psi$.
\end{proof}
\begin{remark}
The RS was derived by the following argument: to obtain the reduced first-order system corresponding to a nonlinear wave equation, first reduce it to a symmetric-hyperbolic system for the unknown $u$ and its first derivatives $u_k$ ($0\leq k\leq n$). Determine the formal expansion $a_h$ of $u$ up to some given order $h$ inclusive, and the expansion $a_{h-1,k}$ of $u_k$ up to order $h-1$. Then, let $v=(u-a_h)/T^h$, $v_k=(u_k-a_{h-1,k})/T^{h-1}$. The resulting system for $v$ and the $v_k$ is the desired reduced system if $h$ is sufficiently large.
\end{remark}
\begin{remark}
It may be shown that the reduced system has a unique local solution that may be viewed as a continuous function of $T$ and $T\ln T$ with values in a Sobolev space, the existence proof being carried out by performing the same computations on a regularized system obtained by Yosida regularization, or using Friedrichs mollifiers, as in the symmetric-hyperbolic case, see e.g.\ \cite{taylor,fr}.
\end{remark}
\begin{remark}
Even though $Q$ is positive definite, and $A$ has no eigenvalues with negative real parts, $A$ is not positive definite. For this reason, we shall introduce in the next section a weighted the $L^2$ scalar product. This annoyance could have been avoided at the expense of further expansion of the solution by introducing a new unknown $\mathbf z=[\bw-\bw_0-T\bw_1(T,X)-\cdots-\bw_h(T,X)]/T^h$, where $\bw_0+T\bw_1(T,X)+\cdots$ is the formal expansion of $\bw$, and the $\bw_k$ ($k=0, 1, \dots$) are polynomials in $\ln T$. By substitution, one checks that $\mathbf z$ solves a system of the same form as the first reduced system, but with $A$ replaced by $A+h$. Taking $h$ large enough, one may always assume that $A+h$ is positive definite.
\end{remark}

\section{Step 3: Estimating $w$}

The local solution of the reduced system is obtained by a
modification of the method of solution of symmetric hyperbolic
systems. The net result, for our purposes, is that there is a
$T_0>0$ (possibly smaller than $b$ or $\alpha$), and a unique
solution $w$ for every choice of $w_0$ small in $H^s$, that is
continuous with values in $H^s$. Let us therefore fix some
constant $M > \ep$ such that, for $\|\psi\|_\sigma+\|w_0\|_s\leq
M$, we have $\|\bw(T)\|_s\leq 2M$ for $T\leq T_0$. By the
continuity of $w$, this is certainly true for small $T_0$. Since
$s>n/2$, this implies an $L^\infty$ bound as well. We may also
assume that $\psi$ is small enough in $H^\sigma$ to ensure that
$\gamma(=1-|\nabla\psi|^2)$ remains bounded away from zero. Since
the RS is a standard symmetric hyperbolic system for $T>0$, its
solutions persist as long as they do not blow up in $C^1$ (this
means that $u$ has no singularity other than the one for $T=0$).
Since $s>n/2+1$, this follows from a bound in $H^s\times H^{s-1}$.
We proceed to show that $\bw$ actually satisfies stronger
estimates that will enable us to show that it actually extends to
all $T\in[0,b]$, and remains small there.

For this, we must first estimate $f=Tf_0+T\ln Tf_1$. Recall that, by Moser-type estimates, the $H^s$ Sobolev norms of the powers of $\bw$ are estimated linearly in terms of the Sobolev norms of $\bw$ if $\bw$ is known to be bounded. By consideration of the expression for $f$, one obtains an estimate of the form
\[ \|f\|_s\leq K_1(M)T|\ln T|(1+T^8)(\|\psi\|_\sigma+\|\bw(T)\|_s).
\]
Similarly, we also have an $H^0$ (i.e., $L^2$) estimate
\[ \|f\|_0\leq K_2(M)T|\ln T|(1+T^8)(\|\psi\|_\sigma+\|\bw(T)\|_0).
\]
The derivation of the $L^2$ estimates on $\bw$ requires a slight modification of the standard scalar product. Let us write $(f,g)$ for the real $L^2$ scalar product on functions of the space variables $X$. Introduce the matrix $V=\mathop{{\rm diag}}(6\gamma,1,\dots,1)$; the matrix $VQA$ is then nonnegative, and satisfies $VA^i=A^i$. Multiplying (\ref{eq:symmetric}) by $V$ and taking the scalar product with $\bw$, we obtain, since $VQ$ is independent of $T$, $D(\bw,VQ\bw)=2(\bw,VQD\bw)$, hence
\[ \frac12 D(\bw,VQ\bw) + (\bw,VQA\bw) = \sum_iT(\bw,A^i\pa_i\bw)+(\bw,Vf).
\]
Now, $(\bw,VQA\bw)\geq 0$ and $(\bw,A^i\pa_i\bw)=(-\pa_i[A^i\bw],\bw)$ since $A^i$ is symmetric. It follows, by expanding $\pa_i[A^i\bw]$, that $2(\bw,A^i\pa_i\bw)=(-[\pa_i A^i]\bw,\bw)$. This quantity may be estimated by $C\|\pa_i A^i\|_{L^\infty}(\bw,V\bw)\leq C(M)(\bw,V\bw)$, since $A^i$ involves the first derivatives of $\psi$. Consider now $e_0(T)=(\bw,VQ\bw)(T)$, a quantity equivalent to the $L^2$ norm since $\gamma$ is bounded away from zero. We obtain
\[De_0\leq T|\ln T|(K_3(M)\|\psi\|_\sigma+K_4(M)e_0)(1+T^8),
\]
hence (remembering that $D=T\pa_T$),
\[\pa_T\left\{  \ln(K_3\|\psi\|_\sigma+K_4e_0(T)) \right\}\leq |\ln T|(1+T^{8}).
\]
Integrating, we obtain that for $0\leq T\leq b$, one has
\[ K_3\|\psi\|_\sigma+K_4e_0(T)\leq (K_3\|\psi\|_\sigma+K_4e_0(0))\exp\{ \int_0^b |\ln \tau|(1+\tau^8)d\tau \}.
\]
Since $e_0(T)$ is estimated by a multiple of $\|w_0\|_s$, we
obtain an inequality of the form
\[ e_0(T)\leq K_5\|\psi\|_\sigma + K_6\|w_0\|_s.
\]
To obtain spatial derivative norms, one performs the same work on the system solved by $\bv=S\bw$, estimating $e_s(T)=(S\bw,VS\bw)$ (this is equivalent to the norm of $\bw(T)$ in $H^{s}$). This system satisfies the same assumptions as the original one because of commutator estimates. Time derivatives are then estimated using the reduced system itself.

Take now $s$ to be an integer. If $\sigma$ is large enough at the
outset, there is a positive $\delta$ such that the inequality
$\|\psi\|_\sigma + \|w_0\|_s<\delta$ ensures that $w$ remains less
than $\ep$, hence less than $M$ up to time $T=b$ at least, and
therefore is well-defined on the slab $\alpha-1\leq T\leq b$.
Furthermore, by induction, $\pa_T^kw$ is, for $s-k>n/2$, bounded
in $H^{s-k}$ in this slab, so that $w$ belongs to the Sobolev
class $H^{m}$ \emph{in space and time} with respect to the $(X,T)$
variables if the integer $m$ satisfies $2m>s$ and $s-m>n/2$. For
$\sigma$, hence $s$ large enough, we may take $m$ greater than
both $n/2+1$ and $s_0+1$. Since the mapping
$(t,x)\mapsto(t-\psi(x),x)$ is a local diffeomorphism of class
$H^s$ for any $s>n/2+1$,\footnote{The composition of Sobolev maps
with $s>n/2+1$ is discussed, for instance, in \cite[p.~108]{e-m}.}
$w$ is also of class $H^m$ with respect to the original $(x,t)$
variables. This is also true of $T^3w$. The traces of this
function and its $t$-derivative on $(t=\alpha)$ therefore belong
in particular to $H^{s_0}$ and $H^{s_0-1}$ respectively, and are
small in these spaces if $\|\psi\|_\sigma + \|w_0\|_s$ is small
enough.

\section{Step 5: Estimating the Cauchy data of $u$}

At this stage, we know that the Cauchy data of $1/t$ and $\Phi$ on
$\{(x,t) : t=\alpha \text{ and }x\in\Omega\}$ are both less than
$\ep/4$ if $\psi$ is small enough. We also know that the Cauchy
data of $T^3w$ on $(t=\alpha)$ may be made less than $\ep/2$ in
$H^{s_0}\times H^{s_0-1}$ by choosing $\sigma$  and $s$ large
enough, and the singularity data $\psi$ and $w_0$ small enough in
their respective spaces. Therefore, the Cauchy data of
$u=1/t+\Phi+T^3w$ are less than $\ep$ in $H^{s_0}\times H^{s_0-1}$
if the singularity data are small enough, QED.

\section{Concluding remarks}

We have shown that the reachable set in the cubic nonlinear wave
equation contains solutions that blow up on any prescribed compact
set. The control time is here a priori very large if $\ep$ is
(that is, if the local controllability set is very small).
However, the solution is far from unique, since all solutions
having the same $\psi$ but different $w_0$ all have the same
blow-up set. This raises the question whether one may optimize the
control time by proper choice of $w_0$.\footnote{From a practical
standpoint, one suggestion would be to determine a formal solution
to high order, taking $w_0=\theta Z(X)$, with $Z$ a bump function
and $\theta$ small, and to plot the values of the norm of the
Cauchy data of the solution on some hyperplane $(t=\alpha')$, with
$\alpha'$ not too large, as a function of $\theta$. It is
conceivable that a value of $\theta$ yielding a minimum of this
norm would be a good candidate.} Also, since there are many
different functions $\psi$ that have the same zero set, the choice
of $\psi$ could also be of some interest, given that the curvature
of the blow-up set is related to the rate of concentration of the
so-called ``energy'' \cite{SK-laser}.\footnote{Thus, for blow-up
at a single point, a sharply peaked $\psi$ could be preferable in
some applications, and in others a shallow extremum would on the
contrary be desirable.}

The possibility of control of singularities seems to be a further
illustration of Russell's suggestion \cite[pp.~640-641]{russell}
that the development of control theory was slowed down by the
``historically dominant emphasis on well-posedness and
regularity'' in the study of PDEs, putting to the fore the search
for conditions under which the influence of the data on the
solution is ``not too great.'' By contrast, in control theory ``we
want to know that the influence of the control functions on [the
solution] is `not too little'---''and the present paper shows that
this influence may be largest possible, and force the solution to
become infinite. The main technical point is that the solution
admits a stable parameterization of solutions by singularity
data---and not only by Cauchy data: blow-up is a stable
phenomenon, amenable to control.

\medskip\quad
Received October 2012; revised August 2013.
\medskip

\end{document}